\title{Random Subnetworks of Random Sorting Networks}
\author{Omer Angel \and Alexander E. Holroyd}
\date{12 November 2009}
\newenvironment{mylist}{
\begin{enumerate}[(i)]\setlength{\itemsep}{0pt}\setlength{\parskip}{0pt}
  \setlength{\parsep}{0pt}}{\end{enumerate}}
\newtheorem{thm}{Theorem}
\newtheorem{lemma}[thm]{Lemma}
\newtheorem{cor}[thm]{Corollary}
\newtheorem{prop}[thm]{Proposition}
\newtheorem{conj}[thm]{Conjecture}
\renewcommand{\P}{\mathbb{P}}
\newcommand{\E}{\mathbb{E}}
\newcommand{\R}{\mathbb{R}}
\newcommand{\Z}{\mathbb{Z}}
\newcommand{\dof}{\bf\boldmath}
\newcommand{\h}{\tfrac12}
\newcommand{\ess}{S}
\newcommand{\id}{\textrm{id}}
\begin{document}
\maketitle
\renewcommand{\thefootnote}{}
\footnotetext{{\bf\hspace{-6mm}Key words:} sorting network,
random sorting, reduced word, Polya urn}
\footnotetext{{\bf\hspace{-6mm}2010 Mathematics Subject
Classifications:} 60C05, 05E10, 68P10}
\footnotetext{\hspace{-6mm}Funded in part by Microsoft Research
and NSERC.}
\renewcommand{\thefootnote}{\arabic{footnote}}

\begin{abstract}
  A sorting network is a shortest path from $12\cdots n$ to $n\cdots21$ in
  the Cayley graph of $S_n$ generated by nearest-neighbor swaps. For $m\leq
  n$, consider the random $m$-particle sorting network obtained by choosing
  an $n$-particle sorting network uniformly at random and then observing
  only the relative order of $m$ particles chosen uniformly at random. We
  prove that the expected number of swaps in location $j$ in the subnetwork
  does not depend on $n$, and we provide a formula for it. Our proof is
  probabilistic, and involves a Polya urn with non-integer numbers of
  balls. From the case $m=4$ we obtain a proof of a conjecture of
  Warrington. Our result is consistent with a conjectural limiting law of
  the subnetwork as $n\to\infty$ implied by the great circle conjecture
  Angel, Holroyd, Romik and Vir\'ag.
\end{abstract}

\section{Introduction}

Let $\ess_n$ be the symmetric group of all permutations
$\sigma=(\sigma(1),\ldots,\sigma(n))$ on $\{1,\ldots,n\}$, with composition
given by $(\sigma\tau)(i):=\sigma(\tau(i))$. For $1\leq s \leq n-1$ denote
the adjacent transposition or {\dof swap} at location $s$ by $\tau_s :=
(\,s\ \ s+1\,)=(1,2,\ldots,s+1,s,\ldots,n)\in\ess_n$. Denote the {\dof
  identity} $\id:=(1,2,\ldots,n)$ and the {\bf reverse} permutation
$\rho:=(n,\ldots,2,1)$. An $n$-particle {\dof sorting network} is a
sequence $\omega=(s_1,\ldots,s_N)$, where $N:=\binom{n}{2}$, such that
\[
\tau_{s_1} \tau_{s_2} \cdots \tau_{s_N}=\rho.
\]
For $1\leq t\leq N$ we refer to $s_t=s_t(\omega)$ as the $t$th {\dof swap
  location}, and we call the permutation
$\sigma_t=\sigma_t(\omega):=\tau_{s_1} \cdots \tau_{s_t}$ the {\dof
  configuration at time $t$}. We call $\sigma_t^{-1}(i)$ the {\dof location
  of particle $i$} at time $t$.

Given an $n$-particle sorting network $\omega$ and a subset $A$ of
$\{1,\ldots,n\}$ of size $m$, the induced {\dof subnetwork} $\omega|_A$ is
the $m$-particle sorting network obtained by restricting attention to the
particles in $A$. More precisely, if the elements of $A$ are
$a_1<a_2<\cdots<a_m$, delete from each configuration $\sigma_t$ of $\omega$
all elements not in $A$, and replace $a_i$ with $i$, to give a permutation
in $\ess_m$, then remove all duplicates from the resulting sequence of
permutations; the result is the sequence of configurations of $\omega|_A$.
See Figure \ref{subnet}.

\begin{figure}
\centering
\begin{picture}(0,0)(0,0)
\put(0,0){$1$}\put(0,20){$2$}\put(0,40){$3$}\put(0,60){$4$}\put(0,80){$5$}
\put(207,0){$5$}\put(207,20){$4$}\put(207,40){$3$}\put(207,60){$2$}\put(207,80){$1$}
\put(270,20){$1$}\put(270,40){$2$}\put(270,60){$3$}
\put(340,20){$3$}\put(340,40){$2$}\put(340,60){$1$}
\put(230,37){\Huge$\rightarrow$}
\end{picture}
\resizebox{!}{3.1cm}{\includegraphics{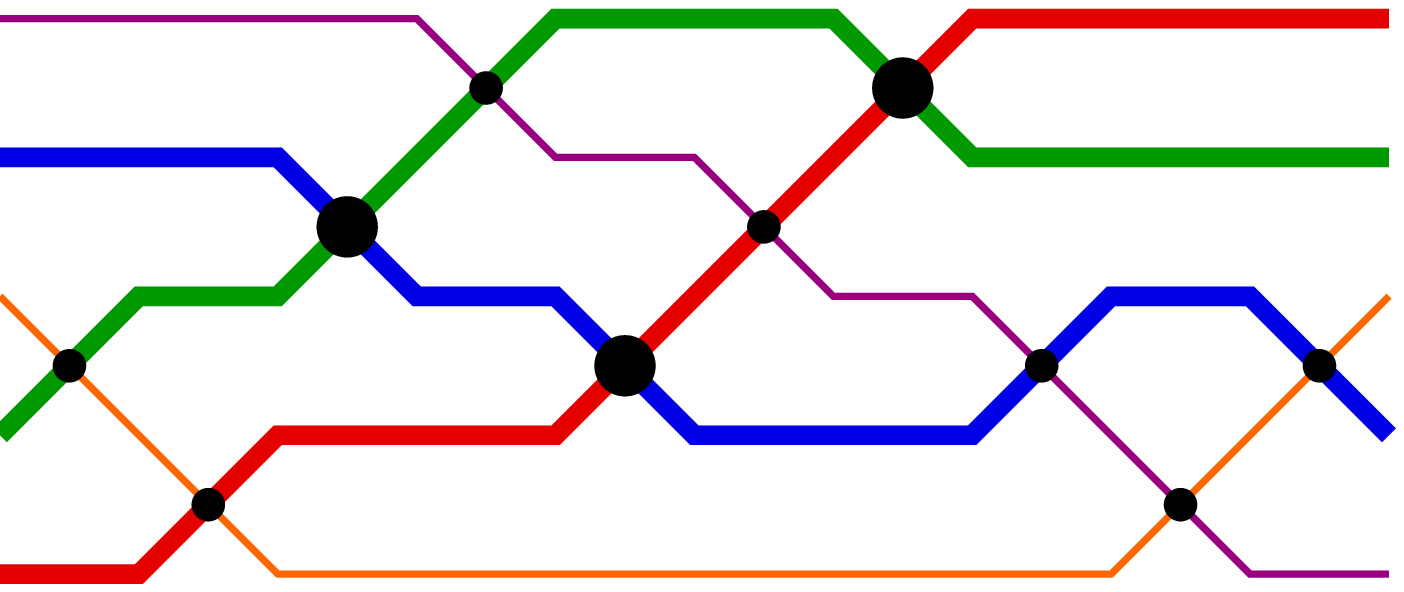}}
\hspace{2cm}
\resizebox{!}{3.1cm}{\includegraphics{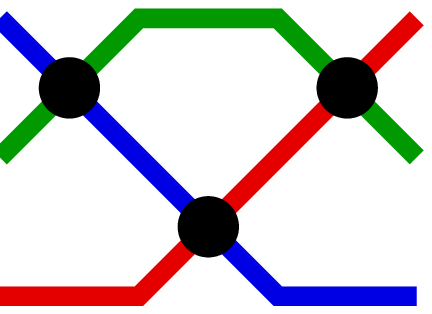}}
\caption{An illustration of the $5$-particle sorting network
  $\omega=(2,1,3,4,2,3,4,2,1,2)$, together with the 3-particle subnetwork
  $\omega|_A=(2,1,2)$ induced by the subset of particles $A=\{1,2,4\}$.}
\label{subnet}
\end{figure}

The {\dof uniform sorting network} $\omega_n$ is a random sorting network
chosen according to the uniform measure on the set all $n$-particle sorting
networks. For $m\leq n$, the {\dof random $m$-out-of-$n$ subnetwork}
$\omega^n_m$ is the random $m$-particle sorting network
$(\omega_n)|_{\mathcal A}$, where $\omega_n$ is a uniform $n$-particle
sorting network, and $\mathcal{A}$ is an independent uniformly random
$m$-element subset of $\{1,\ldots,n\}$.

Uniform sorting networks were investigated in \cite{ahrv}, leading to many
striking results and conjectures. (A different probability measure on
sorting networks was considered in \cite{oriented}.) Our main result is the
following surprising fact about random subnetworks. We denote the {\dof
  falling factorial} $(a)_r:=a(a-1)\cdots(a-r+1)$ (so $r!=(r)_r$).
\begin{thm}\label{main}
  Let $m\leq n$. In the random $m$-out-of-$n$ subnetwork, the expected
  number of swaps in location $j$ does not depend on $n$, and equals
  \[
  \E \#\big\{t: s_t(\omega^n_m)=j\big\} =
  \frac{(j-\h)_{j-1}(m-j-\h)_{m-j-1}}{(j-1)!(m-j-1)!},
  \qquad 1\leq j \leq m-1.
  \]
\end{thm}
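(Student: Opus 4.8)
The idea: track the relative order of a subset of $m$ particles. The key insight seems to be that swaps in the subnetwork at location $j$ correspond to "adjacencies" in the big network being resolved between the particle sitting at relative rank $j$ and relative rank $j+1$ among the $m$ chosen. So we need to count, over the big network, how often a swap $\tau_s$ swaps two particles $a, b \in A$ such that exactly $j-1$ of the chosen particles are currently below them and $m-j-1$ are above.

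Since $\omega_n$ is uniform, by symmetry of the big sorting network... hmm, actually the key structural fact about uniform sorting networks is that the swap at time $t$ swaps particles $\sigma_t^{-1}(s_t)$ and $\sigma_t^{-1}(s_t+1)$, and each pair of particles $(i,j)$ with $i<j$ is swapped exactly once (since $N = \binom n 2$ and each swap must "uncross" a pair). So there is a bijection: times $\leftrightarrow$ pairs of particles. For a uniform sorting network, we want: given that a pair $(a,b)$ gets swapped, what is the distribution of the configuration (the set of particles between/below/above) at the moment it happens? Actually, what matters for the subnetwork is just: among the $m$ chosen particles (which include $a,b$ if $a,b \in A$), how many are "below" the pair at the swap time.

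So I would (1) express the expectation as $\sum$ over pairs $\{a,b\}$ of the probability (over $\mathcal A$ and $\omega_n$) that $a,b \in A$ and, at the time $a,b$ are swapped in $\omega_n$, exactly $j-1$ other chosen particles lie below them; (2) reduce this, using exchangeability, to a statement about a single swap of a generic pair; (3) the configuration-at-swap-time distribution must have a clean form. Here is where the Polya urn with non-integer numbers of balls comes in (as the abstract promises): I expect that conditioning on the moment a designated pair $(a,b)$ is swapped, the remaining $n-2$ particles are each independently below/above with probabilities that behave like a Polya/Dirichlet structure — specifically, I'd guess the fraction of the $n-2$ particles lying below is distributed as (or dominated by) a $\mathrm{Beta}(\tfrac12, \tfrac12)$ or similar, because of the time-reversal and left-right symmetries of sorting networks together with the fact that "the swap of $(a,b)$" is a special random time. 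Then sampling $m-2$ of the $n-2$ particles and asking for exactly $j-1$ below gives a Polya-urn / Beta-Binomial computation yielding $\binom{m-2}{j-1}\mathrm{B}(j-\tfrac12, m-j-\tfrac12)/\mathrm{B}(\tfrac12,\tfrac12)$, which after simplification is exactly the stated $(j-\tfrac12)_{j-1}(m-j-\tfrac12)_{m-j-1}/((j-1)!(m-j-1)!)$ — note the final formula is manifestly independent of $n$, which is the content of the theorem.

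The main obstacle is establishing that half-integer Beta/urn law for the configuration at the swap time of a designated pair. I would try to derive it by a recursion on $n$: condition on the first or last swap of $\omega_n$, relate $\omega_n$ to $\omega_{n-1}$, and track how the position of a tagged pair's "below-set" evolves — this should produce an urn in which, at each step, a ball is added with a reinforcement that is a half-integer because two particles (the tagged pair) are being handled as a unit, effectively contributing a weight of $\tfrac12$ each. Getting the base case and the exact reinforcement constants right is the delicate part; once the urn is identified, extracting the Beta limit law and doing the Beta-Binomial sum to match the claimed falling-factorial formula should be routine. I would also double-check the $m=4$ specialization against Warrington's conjecture as a sanity check on the constants.
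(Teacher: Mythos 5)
Your overall framing---one swap per pair of particles, a sum over pairs, hypergeometric subsampling of the particles lying below the swapping pair, and a Beta/Polya structure that makes the answer independent of $n$---is in the right spirit, but the plan leaves unproved exactly the fact that carries all the content: the law of the number of particles below a chosen pair at the moment that pair swaps. For a \emph{fixed} pair $(a,b)$ this law is not a clean Beta-binomial at all (compare the pair $(1,2)$ with the pair $(1,n)$), and your claim that at that moment the remaining $n-2$ particles are ``independently below/above'' is false; what is true, and what the argument actually needs, is only the statement averaged over a uniformly random pair, which is equivalent to knowing the law of the swap location at a uniformly random time. The paper obtains this from two inputs you never invoke: stationarity of the swap-location sequence $(s_1,\ldots,s_N)$ of the uniform network, which reduces everything to the first swap, and the explicit first-swap law $p_n$, which is a nontrivial result of \cite{ahrv} proved via the Edelman--Greene bijection and the hook-length formula---not something recovered by the ``recursion on $n$'' you gesture at, which is speculative and not carried out. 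Note also that the half-integer Polya urn enters the paper differently from what you expect: it is used not to derive the swap-time law, but to prove the subsampling identity $\sum_k p_n(k)\,h^{n-2}_{m-2,k-1}(j-1)=p_m(j)$ via exchangeability, and it is this identity that yields independence of $n$.

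There are also concrete errors in the computation you sketch. The correct mixing law is Beta$(\tfrac32,\tfrac32)$ (the urn starts with $1\tfrac12$ white and $1\tfrac12$ black balls), not Beta$(\tfrac12,\tfrac12)$: the quantity $\binom{m-2}{j-1}B(j-\tfrac12,\,m-j-\tfrac12)/B(\tfrac12,\tfrac12)$ you write does not simplify to the stated right-hand side (already at $m=4$, $j=2$ it disagrees), and in any case the per-pair probability must still be multiplied by $\binom{m}{2}$, since the theorem's right-hand side equals $\binom{m}{2}p_m(j)$ with $p_m$ the first-swap law of an $m$-particle network; the correct per-pair expression is $\binom{m-2}{j-1}B(j+\tfrac12,\,m-j+\tfrac12)/B(\tfrac32,\tfrac32)$. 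So as it stands the proposal has a genuine gap (the swap-time distribution, i.e.\ stationarity plus the formula for $p_n$) together with incorrect constants; filling the gap essentially requires importing the results of \cite{ahrv} used in Proposition \ref{semi}, at which point the argument becomes the paper's.
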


Given only that the left side does not depend on $n$, the formula on the
right side may be recovered by reducing to the case $n=m$, which gives
$\omega^n_m=\omega_m$, and using known results on the uniform sorting
network (specifically, Proposition \ref{semi} below).

It is natural to seek generalizations of Theorem \ref{main}. For example
one might ask whether the law of $\#\big\{t: s_t(\omega^n_m)=j\big\}$ is
the same for each $n\geq m$. This is true for $m=3$ (indeed the law of
$\omega^n_3$ is the same for all $n\geq 3$), but fails for $m=4$ with
$n=4,5$.

From the case $m=4$ of Theorem \ref{main} we deduce the following result,
which was conjectured by Warrington \cite{warrington}. (We abbreviate
$(s_1,\ldots,s_N)$ to $s_1\cdots s_N$).

\begin{cor}\label{four}
  For all $n\geq 4$, the probability that the random $4$-out-of-$n$
  subnetwork lies in $\{123212,321232,212321,232123\}$ is $1/4$.
\end{cor}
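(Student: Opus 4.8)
The plan is to deduce the corollary from the case $m=4$, $j=2$ of Theorem~\ref{main}, which gives
\[
\E\,\#\big\{t:s_t(\omega^n_4)=2\big\}=\frac{(\tfrac32)_1(\tfrac32)_1}{1!\,1!}=\frac94
\qquad\text{for every }n\geq 4.
\]
Write $X:=\#\{t:s_t(\omega^n_4)=2\}$ for the number of swaps at the middle location. Every $4$-particle sorting network is a reduced word of length $\binom42=6$ for the reverse permutation $\rho\in\ess_4$, with all swap locations in $\{1,2,3\}$, so $X$ is a deterministic function of $\omega^n_4$. The corollary will follow from two purely combinatorial facts about such words: (a) $X\in\{2,3\}$ always, and (b) $X=3$ holds exactly for the four networks $123212$, $321232$, $212321$, $232123$.

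For both facts I would use the decomposition of a $4$-particle network $\omega$ by its occurrences of the middle swap: write $\omega=B_0\,2\,B_1\,2\,\cdots\,2\,B_X$, where each $B_i$ is a (possibly empty) block of swaps in locations $\{1,3\}$. Since $\tau_1$ and $\tau_3$ are commuting involutions, each $B_i$ is a reduced word in $\langle\tau_1,\tau_3\rangle$ and hence has length at most $2$; and since $\tau_2\tau_2$ is not reduced, each of the $X-1$ internal blocks $B_1,\dots,B_{X-1}$ has length at least $1$. Counting letters gives $X+\sum_i|B_i|=6$ with $X-1\le\sum_i|B_i|\le 2(X+1)$; the left inequality forces $X\le3$ and the right forces $X\ge2$, proving (a). For (b), when $X=3$ one has $\sum_i|B_i|=3$ with $|B_1|,|B_2|\ge1$, which leaves only a short list of block‑length profiles; for each I would run through the finitely many choices of the blocks, using the braid relations $\tau_1\tau_2\tau_1=\tau_2\tau_1\tau_2$, $\tau_2\tau_3\tau_2=\tau_3\tau_2\tau_3$ and the non‑reducedness of $\tau_i\tau_j\tau_i\tau_j$ (for $\{i,j\}=\{1,2\}$ or $\{2,3\}$) to discard words that are not reduced, and checking that the survivors indeed reach $\rho$. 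This leaves precisely the four listed networks. (Equivalently, (a) and (b) can be verified by inspecting the $16$ reduced words of $\rho\in\ess_4$.)

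Given (a) and (b) the corollary is immediate: since $X$ takes only the values $2$ and $3$,
\[
\frac94=\E X=2\,\P(X=2)+3\,\P(X=3)=2+\P(X=3),
\]
so $\P(X=3)=\tfrac14$, and by (b) the event $\{X=3\}$ is exactly $\{\omega^n_4\in\{123212,321232,212321,232123\}\}$. I expect the only real work to be the case analysis establishing (b); everything else is forced once Theorem~\ref{main} is available. In effect the content of the corollary is that Theorem~\ref{main} already encodes Warrington's probability through the single number $\E X=\tfrac94$.
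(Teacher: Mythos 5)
Your proposal is correct and is essentially the paper's argument: the paper likewise notes that among the $16$ four-particle sorting networks the four listed ones have $3$ swaps at location $2$ and the other twelve have $2$, and then solves $9/4=2\P(X=2)+3\P(X=3)$ using Theorem~\ref{main}. Your block decomposition $B_0\,2\,B_1\,2\cdots 2\,B_X$ is just a tidier way of organizing the finite check that the paper dismisses as ``easy to check,'' so the substance is the same.
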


The present work was triggered by Warrington's conjecture.
Corollary \ref{four} has a natural interpretation in terms of geometric
sorting networks, which we define next. Consider a set of $n$ points in
$\R^2$ with no three collinear and no two in the same vertical line. The
associated {\dof geometric sorting network} is defined as follows. Label
the points $x_1,\ldots,x_n$ in order of their projections onto the
horizontal axis. For all but finitely many angles $\theta$, the projections
onto the line through $0$ in direction $\theta$ fall in an order
$x_{\sigma(1)},\ldots,x_{\sigma(n)}$ corresponding to a permutation
$\sigma=\sigma_\theta$ of the original order, and as $\theta$ is increased
from $0$ to $\pi$, these permutations form the configurations of a sorting
network.

The four networks listed in Corollary \ref{four} are precisely those
geometric networks in which one point is in the convex hull of the other
three. It turns out that all $n$-particle sorting networks are geometric
for $n\leq 4$, but not for $n\geq 5$, as proved in \cite{goodman}. In fact
it is proved in \cite{pattern} that the uniform sorting network is
non-geometric with probability tending to $1$ as $n\to\infty$; on the other
hand, a principal conjecture of \cite{ahrv} is that in a certain sense the
uniform sorting network is approximately geometric.

The conjectures in \cite{ahrv} lead to the following precise prediction for
the limiting law of the random $m$-out-of-$n$ subnetwork as $n\to\infty$.
We will prove that Theorem \ref{main} is consistent with this conjecture.

\begin{conj}\label{geom}
  Let $X_1,\ldots,X_m$ be independent identically distributed random points
  in $\R^2$ chosen according to the {\em Archimedes} density
  \[
  \frac{1}{2\pi\sqrt{1-x^2-y^2}}
  \]
  on the disc $x^2+y^2< 1$, and let $\widehat\omega_m$ be the associated
  random geometric sorting network. The random $m$-out-of-$n$ subnetwork
  $\omega^n_m$ satisfies the convergence in distribution
  \[
  \omega^n_m\stackrel{D}{\to} \widehat\omega_m\qquad\text{as }n\to\infty.
  \]
\end{conj}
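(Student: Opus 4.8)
The conjecture is stated as such precisely because it lies beyond current techniques; the natural route to it is to \emph{derive} it from the great circle conjecture of \cite{ahrv}, thereby reducing the combinatorial convergence of subnetworks to the analytic convergence of particle trajectories. I describe this route and indicate where the genuine difficulties lie. The argument is necessarily conditional, and its whole point is to show that the great circle conjecture forces Conjecture \ref{geom}.

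The plan is first to pass to a trajectory description of both sides. Identify the Archimedes density as the pushforward to the disc of the uniform measure on $S^2\subset\R^3$ under orthogonal projection (Archimedes' theorem), so that a point $X_i$ with polar coordinates $(R_i,\Phi_i)$ is the shadow of a uniform sphere point. Taking the projection direction to be $\theta=\pi t$ for $t\in[0,1]$, the geometric trajectory of $X_i$ is the sine curve $h_i(t)=R_i\cos(\pi t-\Phi_i)$, which runs from the identity order at $t=0$ to the reverse order at $t=1$. Thus $\widehat\omega_m$ is a deterministic functional of the trajectory vector $(h_1,\dots,h_m)$: it records, in time order, which adjacent pair of trajectories crosses, and at which of the $m-1$ gaps. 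The next step is to recall the form of the great circle conjecture that is needed, namely the multi-particle statement: under the appropriate rescaling of time to $[0,1]$ and position to $[-1,1]$, the joint law of the trajectories of $m$ uniformly chosen particles of $\omega_n$ converges in distribution, in $C([0,1])^m$ with the uniform norm, to $(h_1,\dots,h_m)$ for $m$ i.i.d.\ Archimedes points. Granting this, the subnetwork $\omega^n_m$ is the \emph{same} crossing functional applied to the rescaled pre-limit trajectories (with duplicate configurations removed), so both sides of the claimed convergence are images of convergent trajectory vectors under one and the same map.

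The final step is to invoke the continuous mapping theorem. The crossing functional is not continuous everywhere, so the task is to show that the limiting trajectory vector lies almost surely in its set of continuity. The limiting curves are a.s.\ in general position: the $R_i$ and $\Phi_i$ are a.s.\ distinct, any two curves cross transversally, no two crossings occur at the same time, and no three curves are concurrent. At such a configuration the ordered sequence of crossings, hence the subnetwork, is locally constant under sufficiently small uniform perturbations, so the functional is a.s.\ continuous and the conjecture follows.

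The main obstacle is twofold. Most fundamentally, the great circle conjecture is itself open, so proving $\omega^n_m\stackrel{D}{\to}\widehat\omega_m$ unconditionally would require establishing the multi-particle trajectory limit, which is the crux of \cite{ahrv}. Even granting that input, the delicate point is the last step: the subnetwork is a discrete object read off from the fine crossing structure, and uniform closeness of curves does not by itself forbid spurious extra crossings or near-simultaneous crossings in the pre-limit, nor does it control the small-amplitude curves coming from particles near the centre, whose mutual crossings are the most fragile. Making this step rigorous requires quantitative transversality estimates on the limiting sine curves, together with a probabilistic bound ruling out near-coincidences, so that the a.s.\ continuity of the crossing functional genuinely transfers the trajectory convergence to the desired combinatorial convergence.
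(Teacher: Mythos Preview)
The statement you are addressing is a \emph{conjecture}, and the paper offers no proof of it. The paper's entire treatment is the one-sentence remark that Conjecture~\ref{geom} is implied by \cite[Conjecture~3]{ahrv}, with a pointer to where that implication is implicit in \cite{ahrv}. Your proposal is therefore not in competition with any argument in the paper; rather, it is a fleshed-out version of the very implication the paper merely asserts, and in that sense it is exactly on target.

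Your conditional derivation via the continuous mapping theorem is sound in outline and matches what the paper has in mind. One of your stated worries can be set aside: in any sorting network each pair of particles swaps exactly once, so the rescaled trajectories of the $m$ chosen particles already have precisely $\binom{m}{2}$ pairwise crossings, and there is no danger of ``spurious extra crossings'' in the pre-limit. What actually needs checking is that the \emph{time order} of these $\binom{m}{2}$ crossings, and the relative positions of the remaining $m-2$ trajectories at each crossing time, are stable under small uniform perturbation; this follows from the almost-sure general position of the limiting sine curves just as you describe. The genuine obstacle, as you correctly identify, is that the multi-particle trajectory limit (the great circle conjecture) is itself open, so nothing here yields an unconditional proof.
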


Conjecture \ref{geom} is implied by \cite[Conjecture 3]{ahrv} (and this is
implicit in the discussion at the end of \cite[Section 1]{ahrv} and
\cite[proof of Theorem~5]{ahrv}). The conjecture implies that any statistic
of the law of $\omega^n_m$ should converge to the appropriate
limit; we establish that this indeed holds for the expected value in
Theorem \ref{main}.

\begin{prop}\label{consistent}
  Let $\widehat\omega_m$ be the random geometric sorting network of
  Conjecture \ref{geom}. The expected number of swaps in $\widehat\omega_m$
  at location $j$ equals the right side in Theorem \ref{main}.
\end{prop}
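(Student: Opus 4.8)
The plan is to compute directly the expected number of swaps at location $j$ in the random geometric sorting network $\widehat\omega_m$, and to match it with the formula in Theorem \ref{main}. First I would recall the geometric description of swaps: when the $m$ points $X_1,\ldots,X_m$ are labelled $x_1,\ldots,x_m$ by horizontal position, a swap at location $j$ in $\widehat\omega_m$ at angle $\theta$ occurs exactly when two points that are currently in positions $j$ and $j+1$ (with respect to the direction-$\theta$ projection) cross each other; geometrically this means that the line through those two points has direction $\theta$, and, crucially, that at that angle exactly $j-1$ of the remaining points lie strictly on one side of that line (so that the crossing pair occupies positions $j,j+1$). Hence
\[
\E\#\{t:s_t(\widehat\omega_m)=j\}
=\binom{m}{2}\,\P\bigl(\text{the line }X_1X_2\text{ has exactly }j-1
\text{ of }X_3,\ldots,X_m\text{ on its left}\bigr),
\]
by symmetry among the $\binom{m}{2}$ pairs, since each unordered pair gives rise to exactly one swap (the points cross once as $\theta$ runs over $[0,\pi)$). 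So the task reduces to the computation of a single probability about $m$ i.i.d. Archimedes-distributed points.

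Next I would exploit the defining feature of the Archimedes density: it is the pushforward, under vertical projection $(x,y,z)\mapsto(x,y)$, of the uniform measure on the unit sphere $S^2\subset\R^3$ (this is the classical Archimedes "hatbox" theorem, and is exactly why this density appears in \cite{ahrv}). Thus I can lift $X_1,\ldots,X_m$ to i.i.d. uniform points $Y_1,\ldots,Y_m$ on $S^2$. A line in the plane pulls back to a great circle on the sphere, and "on the left of the line $X_1X_2$" becomes "on one specified side of the great circle through $Y_1,Y_2$" — that is, in the hemisphere cut off by the plane through the origin, $Y_1$, and $Y_2$. So the probability above equals the probability that, among $m$ i.i.d. uniform points on $S^2$, a uniformly random great circle through two of them has exactly $j-1$ of the other $m-2$ in a prescribed one of its two hemispheres. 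Conditioned on the great circle (equivalently, on its normal direction, which by rotational symmetry is uniform on $S^2$), the remaining $m-2$ points are i.i.d. uniform, hence independently land in the two hemispheres with probability $\tfrac12$ each — but one must be careful: conditioning on the great circle passing through $Y_1,Y_2$ biases the law of the pair $(Y_1,Y_2)$, though not the law of $Y_3,\ldots,Y_m$, which remain i.i.d. uniform and independent of the circle. Therefore the count of points in the prescribed hemisphere is $\mathrm{Binomial}(m-2,\tfrac12)$, which would give probability $\binom{m-2}{j-1}2^{-(m-2)}$ — but this cannot be the final answer, since matching it against Theorem \ref{main} fails. The resolution is that the correct geometric condition involves the \emph{sign} of the crossing (whether the pair in positions $j,j+1$ swaps to $j+1,j$), equivalently an orientation of the great circle, and that the event "$X_1X_2$ is the crossing line at positions $j,j+1$" must be expressed more carefully in terms of which point is above which — reintroducing the Archimedes (as opposed to spherical-uniform) weighting of the crossing pair.

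The main obstacle, then, is this last bookkeeping step: correctly translating "a swap at location $j$" into a clean event on the sphere, including the effect of the vertical ordering $x_1<\cdots<x_m$ used to label the particles. Concretely, I expect to have to compute, for i.i.d. uniform $Y_1,Y_2$ on $S^2$ with $X_i$ their projections, the probability that (a) exactly $j-1$ of $m-2$ further i.i.d. points lie in the lower hemisphere relative to the plane $OY_1Y_2$, \emph{and} (b) a condition pinning down the horizontal order of $X_1,X_2$ relative to the crossing, with the resulting density of the pair being the Archimedes density rather than the uniform one. After setting up this integral I would reduce it, by the rotational symmetry about the vertical axis, to a one-dimensional integral over the latitude (equivalently, over $\sqrt{1-x^2-y^2}=z$, the height on the sphere) of a point, at which stage Beta-integral identities should produce the half-integer falling factorials $(j-\h)_{j-1}$ and $(m-j-\h)_{m-j-1}$; the appearance of half-integers is the signature of integrating powers of $\sqrt{1-r^2}$ against the disc, which is precisely what the Archimedes density forces. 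I would then verify that the resulting expression equals $\dfrac{(j-\h)_{j-1}(m-j-\h)_{m-j-1}}{(j-1)!(m-j-1)!}$, completing the proof. As a sanity check I would confirm the identity $\sum_{j=1}^{m-1}\dfrac{(j-\h)_{j-1}(m-j-\h)_{m-j-1}}{(j-1)!(m-j-1)!}=\binom{m}{2}$, which must hold since the total number of swaps in any $m$-particle sorting network is $\binom{m}{2}$, and which also serves to normalize the latitude integral.
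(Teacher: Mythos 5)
Your opening reduction is correct and coincides with the paper's: since every pair of particles swaps exactly once, the expectation is $\binom{m}{2}$ times the probability that the pair $X_1,X_2$ swaps at location $j$, and this event is exactly that $j-1$ of $X_3,\ldots,X_m$ lie on the appropriate side of the line through $X_1$ and $X_2$. But the spherical lift you then attempt contains a geometric error from which the proposal never recovers. Under the vertical projection realizing the Archimedes density, the preimage of the line $X_1X_2$ is the intersection of the sphere with a \emph{vertical} plane at distance $|r|$ from the axis, where $r$ is the signed distance of the line from the origin; this is in general a small circle, not the great circle through $Y_1,Y_2$, and the two pieces it cuts off are not hemispheres. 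That --- and not any issue about the sign or orientation of the crossing, or an ``Archimedes weighting of the crossing pair'' --- is why your $\mathrm{Binomial}(m-2,\h)$ count fails: given $r$, each of the remaining points lands on the specified side with probability $(1+r)/2$, so the counts form a mixture of binomials over the law of $r$ (exchangeable, but not i.i.d.\ fair coins).

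The step you defer (``I would then verify\ldots'') is precisely the substance of the proof, and it is not mere bookkeeping of orientations. What is needed is: (a) the projection of an Archimedes-distributed point onto any fixed direction is uniform on $[-1,1]$; (b) the signed distance from the origin of the line through two independent Archimedes points has the semicircle density $\frac2\pi\sqrt{1-r^2}$ on $[-1,1]$ (both facts are taken from \cite[proof of Theorem~5]{ahrv}); (c) conditionally on this distance $R=r$, the projections of $X_3,\ldots,X_m$ remain uniform and independent of $R$, so the swap is at location $j$ with probability $\binom{m-2}{j-1}\bigl(\frac{1+r}{2}\bigr)^{j-1}\bigl(\frac{1-r}{2}\bigr)^{m-j-1}$. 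Integrating this against the semicircle law and substituting $t=(r+1)/2$ gives the Beta integral $\int_0^1 t^{j-\h}(1-t)^{m-j-\h}\,dt=\Gamma(j+\h)\Gamma(m-j+\h)/\Gamma(m+1)$, which, after multiplying by $\binom{m}{2}\binom{m-2}{j-1}$ and the constants, yields the right side of Theorem \ref{main}. You correctly anticipate that a Beta integral with half-integer exponents should appear, but without the uniform-projection property and the semicircle law for the line you have no correct integrand, so as written the argument is incomplete and its intermediate spherical step is false.
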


\section{Proof of main result}

We will use the following key properties of uniform sorting networks.

\begin{prop}[\cite{ahrv}]\label{semi}
  Consider a uniform $n$-particle sorting network, and write
  $N=\binom{n}{2}$.
  \begin{mylist}
  \item The random sequence of swap locations is stationary. That is, \\
    $(s_1,\ldots,s_{N-1})$ and $(s_2,\ldots,s_N)$ are equal in law.
  \item The probability mass function $p_n$ of the first swap location is
    given by
    \[
    p_n(k)=\P(s_1=k)= \frac 1N \cdot
    \frac{(k-\h)_{k-1}(n-k-\h)_{n-k-1}}{(k-1)!(n-k-1)!},
    \quad 1\leq k\leq n-1.
    \]
  \end{mylist}
\end{prop}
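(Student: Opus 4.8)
The plan is to treat the two assertions separately: part (i) by exhibiting an explicit measure-preserving bijection of the (finite) set of $n$-networks, and part (ii) by counting reduced words of the reverse permutation and evaluating the count via the hook length formula.

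For part (i), I would introduce the \emph{rotation} map $T$ sending a network $(s_1,\ldots,s_N)$ to $(s_2,\ldots,s_N,\,n-s_1)$. The first step is to check $T$ maps networks to networks: from $\tau_{s_1}\cdots\tau_{s_N}=\rho$ we get $\tau_{s_2}\cdots\tau_{s_N}=\tau_{s_1}\rho$, and since $\rho\tau_s\rho^{-1}=\tau_{n-s}$ with $\rho^{-1}=\rho$, the relation $\rho\tau_{n-s_1}=\tau_{s_1}\rho$ yields $\tau_{s_2}\cdots\tau_{s_N}\tau_{n-s_1}=\tau_{s_1}\rho\tau_{n-s_1}=\tau_{s_1}\tau_{s_1}\rho=\rho$, so the image is again a network. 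The map $T$ is plainly injective, hence a bijection of a finite set, and therefore preserves the uniform measure. Since the first $N-1$ coordinates of $T(\omega)$ are exactly $(s_2,\ldots,s_N)$, the identity $T(\omega)\stackrel{D}{=}\omega$ gives $(s_2,\ldots,s_N)\stackrel{D}{=}(s_1,\ldots,s_{N-1})$, which is the claimed stationarity.

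For part (ii), I would write $\P(s_1=k)=R_n(k)/R_n$, where $R_n$ is the number of networks (equivalently, reduced words of $\rho$) and $R_n(k)$ the number beginning with $\tau_k$. Deleting the first letter identifies $R_n(k)$ with the number of reduced words of $w:=\tau_k\rho$, this being length-reducing since $\rho$ is the longest element. The key structural step is to identify $w$: a direct computation of the Lehmer code shows $w$ agrees with $\rho$ except that the values at positions $n-k,n-k+1$ are put in increasing order, and its code is the partition $\lambda(k)$ obtained from the staircase $\delta_{n-1}=(n-1,\ldots,1)$ by deleting the corner cell $(n-k,k)$. Hence $w$ is dominant, therefore vexillary, so its Stanley symmetric function is a single Schur function and the number of its reduced words is $f^{\lambda(k)}$; likewise $R_n=f^{\delta_{n-1}}$.

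It then remains to evaluate $p_n(k)=\tfrac1N\, f^{\lambda(k)}/f^{\delta_{n-1}}$ by the hook length formula. Because $\lambda(k)$ is the staircase with a single corner removed, the ratio of hook products collapses to the hooks in the row and column of that corner. Using that the hook length of cell $(i,j)$ in the staircase is the odd number $2(n-i-j)+1$, the row contributes $\prod_{a=1}^{k-1}\frac{2a+1}{2a}$ and the column contributes $\prod_{b=1}^{n-k-1}\frac{2b+1}{2b}$; each telescopes into the ratio of a half-integer falling factorial to an ordinary factorial, giving $(k-\h)_{k-1}/(k-1)!$ and $(n-k-\h)_{n-k-1}/(n-k-1)!$ respectively, and multiplying by $1/N$ yields the stated formula. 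The main obstacle is the identification that $w=\tau_k\rho$ is dominant with exactly the corner-deleted staircase shape, since this is what reduces the word count to a single $f^\lambda$ and hence to a clean hook-length ratio; once that is in hand, the simplification to falling factorials is routine and can be sanity-checked on small cases such as $n=3,4$.
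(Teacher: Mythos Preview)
Your argument is correct. Part (i) is exactly the paper's proof: the rotation $(s_1,\ldots,s_N)\mapsto(s_2,\ldots,s_N,n-s_1)$ is a bijection of the set of networks, hence measure-preserving for the uniform law.

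For part (ii) you reach the same endpoint as the paper---the ratio $f^{\lambda(k)}/f^{\delta_{n-1}}$ with $\lambda(k)$ the staircase minus the corner $(n-k,k)$, evaluated by the hook formula---but you arrive there by a slightly different route. The paper invokes the Edelman--Greene bijection directly: reduced words of $\rho$ correspond to standard Young tableaux of staircase shape, and under this bijection $s_1$ becomes the diagonal position of the largest entry, so $\P(s_1=k)$ is the fraction of staircase tableaux whose maximum sits at $(n-k,k)$. You instead identify $R_n(k)$ with the reduced-word count of $\tau_k\rho$, verify that this permutation is dominant with code $\lambda(k)$, and then appeal to the vexillary case of Stanley's formula to get $f^{\lambda(k)}$. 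Both justifications rest on the Edelman--Greene/Stanley theory, so the difference is one of packaging rather than substance; your version has the mild advantage of not needing the specific statement about where the largest tableau entry lands, at the cost of quoting the ``vexillary $\Rightarrow$ single Schur'' fact. The hook-length simplification you carry out is then identical to what the paper's route requires.
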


Proposition \ref{semi} is proved in \cite[Theorem 1(i) and Proposition
9]{ahrv}.
For the reader's convenience we also summarize the arguments here. Part (i)
follows immediately because $(s_1,\ldots,s_N)\mapsto(s_2,\ldots,s_N,n-s_1)$
is a permutation of the set of all $n$-particle sorting networks. Part (ii)
requires more technology. By a bijection of Edelman and Greene
\cite{edelman-greene}, the location of the first swap is equal in law to
the position along the diagonal of the largest entry in a uniformly random
standard Young tableau of shape $(n-1,\ldots,2,1)$. The mass function of
the latter may be computed by using the hook formula of Frame, Robinson and
Thrall \cite{frame} to enumerate Young tableaux with and without a given
cell on the diagonal.

The stationarity of the uniform sorting network will play a key role in our
proof of Theorem \ref{main}.  We remark that the random $m$-out-of-$n$ subnetwork $\omega^n_m$ is {\em not} in general stationary; even $\omega^5_4$ is a counterexample, as noted in \cite{warrington}.  Stationarity apparently also fails for the random geometric sorting network $\widehat\omega_m$ of Conjecture \ref{geom} (according to simulations), and therefore it presumably fails to hold asymptotically for $\omega^n_m$ as $n\to\infty$.

Our proof of Theorem \ref{main} will proceed by relating the mass function
$p_n$ to a {\dof Polya urn} process, which is defined as follows. An urn
contains black and white balls in some numbers (which for our purposes need
not be integers). At each step, one new ball is added to the urn; if the
urn currently contains $w$ white and $b$ black balls, the next ball to be
added is white with probability $w/(b+w)$, otherwise black.

\begin{lemma}\label{polya}
  Consider a Polya urn that initially contains $1\h$ black and $1\h$ white
  balls.
  \begin{mylist}
  \item The random sequence of colors of added balls is exchangeable (i.e.\
    invariant in law under all permutations affecting finitely many
    elements).
  \item After $n-2$ balls have been added, the probability that $k-1$ of
    them are white equals $p_n(k)$.
  \end{mylist}
\end{lemma}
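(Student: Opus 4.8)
The plan is to carry out the classical Polya-urn computation of the probabilities of colour sequences, allowing the ball counts to be non-integers.

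For part (i), I would first write down the probability of an arbitrary initial segment of added colours. Starting from $w$ white and $b$ black balls, multiplying the step-by-step conditional probabilities shows that any specified sequence of the first $i+j$ additions containing $i$ whites and $j$ blacks has probability
\[
\frac{\bigl[w(w+1)\cdots(w+i-1)\bigr]\,\bigl[b(b+1)\cdots(b+j-1)\bigr]}{(w+b)(w+b+1)\cdots(w+b+i+j-1)}.
\]
Indeed, the $\ell$-th white addition contributes the factor $w+\ell-1$ to the numerator and the $\ell'$-th black addition the factor $b+\ell'-1$, while the denominators run through $w+b,\,w+b+1,\ldots$ in order, regardless of how the whites and blacks are interleaved. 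This formula makes sense for all positive reals $w,b$, in particular $w=b=1\h$, and since it depends only on the colour counts $i$ and $j$, every finite-dimensional marginal of the colour sequence is exchangeable; hence so is the sequence.

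For part (ii), I would specialize to $w=b=1\h$ and count. After $n-2$ additions, the event that exactly $k-1$ of them are white is the disjoint union of the $\binom{n-2}{k-1}$ orderings, each of which, by part (i) with $i=k-1$ and $j=n-k-1$, has probability
\[
\frac{\bigl[\tfrac32\cdot\tfrac52\cdots(k-\h)\bigr]\,\bigl[\tfrac32\cdot\tfrac52\cdots(n-k-\h)\bigr]}{3\cdot4\cdots n},
\]
where the two bracketed products have $k-1$ and $n-k-1$ factors respectively. The identities to verify are that these products equal the falling factorials $(k-\h)_{k-1}$ and $(n-k-\h)_{n-k-1}$ of Proposition \ref{semi}(ii) (read from the bottom factor up), and that the denominator telescopes as $3\cdot4\cdots n = n!/2 = N\,(n-2)!$. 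Using $\binom{n-2}{k-1}=(n-2)!/\bigl((k-1)!\,(n-k-1)!\bigr)$, the probability that $k-1$ of the $n-2$ added balls are white then collapses to
\[
\frac1N\cdot\frac{(k-\h)_{k-1}(n-k-\h)_{n-k-1}}{(k-1)!\,(n-k-1)!}=p_n(k),
\]
as required.

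I do not anticipate a genuine obstacle: the only step needing a little care is the bookkeeping with half-integer factorials, namely matching the rising products coming out of the urn to the falling factorials in the definition of $p_n$ and checking the telescoping of the denominator. (Exchangeability could alternatively be deduced from de Finetti's theorem, with de Finetti measure $\mathrm{Beta}(1\h,1\h)$, but the explicit identity in part (i) is more elementary and is in any case needed for part (ii).)
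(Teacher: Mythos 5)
Your proof is correct and follows essentially the same route as the paper: compute the probability of any fixed colour sequence, observe it depends only on the counts of whites and blacks (giving exchangeability), and multiply by $\binom{n-2}{k-1}$ to match the formula for $p_n(k)$. Your version is slightly more general (arbitrary initial ball numbers $w,b$) and more explicit about the bookkeeping, but the argument is the same.
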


(Property (i) is well known, for arbitrary initial numbers of balls).

\begin{proof}
  The probability of adding $k-1$ white followed by $n-k-1$ black balls is
  \begin{equation}\label{balls}
    \frac{1\h}{3}\frac{2\h}{4}\cdots\frac{k-\h}{k+1}
    \times\frac{1\h}{k+2}\frac{2\h}{k+3}\cdots\frac{n-k-\h}{n}
    =\frac{2\;(k-\h)_{k-1}(n-k-\h)_{n-k-1}}{n!}.
  \end{equation}
  Moreover, the probability of adding $k-1$ white and $n-k-1$ black balls
  in any given order also equals \eqref{balls}, since we obtain the same
  denominators, and the numerators in a different order. This gives the
  claimed exchangeability. Therefore, the probability that $k-1$ of the
  first $n-2$ balls are white is \eqref{balls} multiplied by
  $\binom{n-2}{k-1}$, which equals the right side in Proposition \ref{semi}
  (ii).
\end{proof}

Let $h^n_{m,k}$ be the mass function of a hypergeometric distribution,
i.e., let $h^n_{m,k}(i)$ be the probability of obtaining $i$ white balls
when $m$ are chosen at random without replacement from an urn containing
$n$ balls of which $k$ are white. So
\[
h^n_{m,k}(i)=h^n_{k,m}(i) =
\frac{\binom{k}{i}\binom{n-k}{m-i}}{\binom{n}{m}}
\]
(where as usual we take $\binom{a}{b}=0$ if $b\notin[0,a]$).

\begin{lemma}\label{sum}
  For integers $n,m,j$ satisfying $m\leq n$ and $1\leq j\leq m-1$,
  \[ \sum_{k\in\Z} p_n(k) h^{n-2}_{m-2,k-1}(j-1)=p_m(j). \]
  In particular the left side does not depend on $n$.
\end{lemma}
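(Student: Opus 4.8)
The plan is to interpret the left-hand side probabilistically using the Polya urn of Lemma \ref{polya}, and to recognize the hypergeometric weighting as a sampling operation on the urn's output that leaves the exchangeable law intact. Concretely: by Lemma \ref{polya}(ii), $p_n(k)$ is the probability that, after $n-2$ balls have been added to an urn started with $1\h$ black and $1\h$ white, exactly $k-1$ of the added balls are white. Condition on this event. Then $h^{n-2}_{m-2,k-1}(j-1)$ is exactly the probability that, if we select $m-2$ of those $n-2$ added balls uniformly at random without replacement, we obtain $j-1$ white ones. Hence the sum $\sum_k p_n(k)\,h^{n-2}_{m-2,k-1}(j-1)$ is the unconditional probability that, after running the urn for $n-2$ steps and then choosing a uniformly random $(m-2)$-subset of the added balls, that subset contains exactly $j-1$ white balls.

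The key step is then to observe that, by exchangeability of the sequence of added colors (Lemma \ref{polya}(i)), choosing a uniform $(m-2)$-subset of the first $n-2$ added balls gives a subset whose color-count has the same law as the color-count of the first $m-2$ added balls. Indeed, for an exchangeable sequence, the multiset of colors in any fixed-size uniformly-random subset of a fixed-size prefix is distributed exactly as the multiset of colors in the prefix of that same size; this is immediate upon conditioning on the total number of white balls among the first $n-2$ and using the defining symmetry. Therefore the probability computed in the previous paragraph equals the probability that exactly $j-1$ of the first $m-2$ added balls are white, which by Lemma \ref{polya}(ii) (applied with $m$ in place of $n$) is precisely $p_m(j)$. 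This establishes the identity, and the ``in particular'' clause is automatic since the right side $p_m(j)$ visibly has no dependence on $n$.

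I expect the only real subtlety to be the bookkeeping that makes the exchangeability argument watertight — specifically, verifying that the two layers of randomness (the urn dynamics producing a random color-sequence of length $n-2$, and the independent uniform choice of an $(m-2)$-subset) compose correctly, and that the convention $\binom{a}{b}=0$ for $b\notin[0,a]$ makes the sum over all $k\in\Z$ harmless (the summand vanishes unless $0\le k-1\le n-2$ and $0\le j-1\le k-1$ and $0\le (m-2)-(j-1)\le (n-2)-(k-1)$, all of which are consistent with $1\le j\le m-1\le n-1$). One clean way to present it avoiding any measure-theoretic fuss: fix the number $k-1$ of white balls among the $n-2$ added; then both the conditional law of ``white count in a random $(m-2)$-subset'' and the conditional law of ``white count among the first $m-2$'' are the same hypergeometric $h^{n-2}_{m-2,k-1}$, the first by definition and the second because, conditioned on the total white count, an exchangeable binary sequence is a uniformly random arrangement. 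Summing against $p_n(k)$ and re-expressing via Lemma \ref{polya}(ii) closes the argument.
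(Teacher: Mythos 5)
Your proposal is correct and follows essentially the same route as the paper: interpret $p_n(k)$ via the Polya urn of Lemma \ref{polya}(ii), view the hypergeometric factor as sampling $m-2$ of the $n-2$ added balls, and use exchangeability from Lemma \ref{polya}(i) to replace the random $(m-2)$-subset by the first $m-2$ balls, whose white count has law $p_m(\cdot)$. The extra bookkeeping you flag (conditioning on the total white count and noting that an exchangeable binary sequence is then a uniform arrangement) is a sound way to make the exchangeability step explicit, and matches the paper's argument in substance.
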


\begin{proof}
  Consider the Polya urn of Lemma \ref{polya}. When $n-2$ balls have been
  added, suppose $m-2$ balls are chosen at random from these $n-2$. Then
  the left side is the probability that $j-1$ of those chosen are white. By
  the exchangeability in (i), this probability remains the same if we
  condition on the event that the chosen balls are the first $m-2$ to be
  added to the urn, but then the probability is clearly $p_m(j)$ by Lemma
  \ref{polya} (ii).
\end{proof}

We remark that a direct computational proof of Lemma \ref{sum} is also
possible, using induction on $n$.

\begin{proof}[Proof of Theorem \ref{main}]
  Consider the random uniform $m$-out-of-$n$ subnetwork
  $\omega^n_m=(\omega_n)|_\mathcal{A}$. Let $q(n,m,k,j,t)$ be the
  probability that the $t$th swap in the $n$-particle network $\omega_n$
  occurs in location $k$, and that this swap corresponds to some swap in
  location $j$ in the $m$-out-of-$n$-network. By the stationarity in
  Proposition \ref{semi}(i), and the fact that selecting $m$ items at
  random from $n$ is invariant under permutations of the $n$ items, $q$ is
  constant in $t$. On the other hand we have
  \[
  q(n,m,k,j,1)=p_n(k)\; h^n_{m,2}(2)\; h^{n-2}_{m-2,k-1}(j-1),
  \]
  since given that the first swap in $\omega_n$ has location $k$, the event
  in question occurs if and only if the $m$ chosen elements comprising
  $\mathcal A$ include the pair $k,k+1$, and exactly $j-1$ of
  $1,\ldots,k-1$. Now the required expectation is
  $\sum_{k\in\Z}\sum_{t=1}^N q(n,m,k,j,t)$. By the above observations,
  together with Lemma~\ref{sum} and the fact that
  $h^n_{m,2}(2)=\binom{m}{2}/\binom{n}{2}$, this sum equals
  $\binom{m}{2}p_m(j).$
\end{proof}

\section{Proofs of additional results}

\begin{proof}[Proof of Corollary \ref{four}]
  It is easy to check that of the $16$ $4$-particle sorting networks, the
  given $4$ each have $3$ swaps in location $2$, while the remaining $12$
  each have $2$. But Theorem \ref{main} gives that the expected number of
  swaps in location $2$ is $9/4=(1/4)3+(3/4)2$.
\end{proof}

\begin{proof}[Proof of Proposition \ref{consistent}]
  Let $X_1,\ldots,X_m$ be i.i.d.\ with Archimedes density as in
  Conjecture \ref{geom}. We start by noting two properties. First, the
  projection of $X_1$ onto any fixed direction has uniform distribution on
  $[-1,1]$. Second, the signed distance from $0$ of the line through $X_1$
  and $X_2$ has semicircle law, i.e.\ density function
  $\frac2\pi\sqrt{1-r^2}$ on $[-1,1]$. (See \cite[proof of
  Theorem~5]{ahrv}).

  Since each pair of particles swaps somewhere in $\widehat\omega_m$, it
  suffices to compute the probability that a given pair, say those
  corresponding to $X_1,X_2$, swap in location $j$ (and then multiply by
  $\binom{m}{2}$). This swap occurs when the rotating projection line is
  perpendicular to the line through $X_1$ and $X_2$, at which time the
  projections of $X_1$ and $X_2$ coincide, at a point $R$ with semicircle
  law. This swap is at location $j$ precisely if $j-1$ of $X_3,\ldots,X_m$
  are projected to the left (say) of $R$; but the projections of these
  points are uniform and independent of $R$. Thus the required expectation
  is
  \begin{equation}\label{ans}
    \binom{m}{2}\int_{-1}^1\binom{m-2}{j-1}
    \Big(\frac{1+r}{2}\Big)^{j-1}\Big(\frac{1-r}{2}\Big)^{m-j-1}
    \frac2\pi\sqrt{1-r^2}\;dr.
  \end{equation}
  Leaving aside multiplicative constants and applying the change of
  variable $t=(r+1)/2$, the integral reduces to a standard Beta integral
  (see e.g.\ \cite[p.~148]{hoel}):
  \[
  \int_0^1 t^{j-\frac12} (1-t)^{m-j-\frac12}\;dt
  = \frac{\Gamma(j+\h)\Gamma(m-j+\h)}{\Gamma(m+1)}.
  \]
  Using $\Gamma(\h)=\surd\pi$, a routine computation then shows that
  \eqref{ans} equals the right side in Theorem \ref{main}.
\end{proof}

We remark that the last computation may be viewed as an asymptotic version
of Lemma \ref{sum}, in the limit $n\to\infty$.

\pagebreak
\section*{Open questions}

\begin{mylist}
\item Does the law of the random $m$-out-of-$n$ sorting network
  converge as $n\to\infty$, for fixed $m$?  (Conjecture~\ref{geom} makes a specific prediction about the limit, but even its existence it is not known.)

\item Our use of the Polya urn can be viewed as a natural way to couple the
  law of the first swap location $s_1(\omega_n)$ for different values of
  $n$ -- indeed the coupling has the property that $s_1(\omega_{n+1}) -
  s_1(\omega_n) \in \{0,1\}$. Is there a natural way to couple the entire
  uniform sorting networks $\omega_n$ and $\omega_{n+1}$? For example, can
  it be done in such a way that $\omega_{n}=(\omega_{n+1})|_\mathcal{B}$,
  for some random $n$-element set $\mathcal{B}$?
\end{mylist}

\bibliographystyle{habbrv}
\bibliography{subnet}

\noindent {\sc Omer Angel:}
{\tt angel at math dot ubc dot ca}\\
{\sc Alexander E. Holroyd:}
{\tt holroyd at math dot ubc dot ca}\\
Department of Mathematics, University of British Columbia, \\
121--1984 Mathematics Road, Vancouver, BC V6T 1Z2, Canada.

\vspace{3mm} \noindent {\sc Alexander E. Holroyd:}\\
Microsoft Research, 1 Microsoft Way, Redmond, WA 98052, USA.

\end{document}